\newtheorem{theorem}{Theorem}
\newtheorem{prop}{Proposition}
\newcommand{\PP}{\mathbb{P}}
\newtheorem*{remark*}{Remark}
\newtheorem*{example*}{Example}
\newtheorem*{defn*}{Definition}
\newtheorem*{from1*}{Proposition 2 from~\cite{dbm}}
\newtheorem*{from2*}{Proposition 3 from~\cite{dbm}}
\title{Dessins d'enfants for single-cycle Belyi maps}
\author{Michelle Manes, Gabrielle Melamed, Bella Tobin}
\thanks{MM partially supported by  the Simons Foundation grant \#359721.}
\email{mmanes@math.hawaii.edu, gmelamed@hawaii.edu, tobin@math.hawaii.edu}
\begin{document}
\maketitle

\begin{abstract}
 Riemann's Existence Theorem gives  the following bijections: 
\begin{enumerate}
\item Isomorphism classes of {\em Belyi maps} of degree $d$.
\item Equivalence classes of  {\em generating systems} of degree $d$. 
\item Isomorphism classes of {\em dessins d'enfants} with $d$ edges.
\end{enumerate}

In previous work, the first author and collaborators exploited the correspondence between Belyi maps and their generating systems to provide explicit equations for two infinite families of dynamical Belyi maps. 
We complete this     picture by describing the dessins d'enfants for these two families.
\end{abstract}

\section{Introduction}

Let $X$ be a smooth projective curve. A Belyi map $f:X\rightarrow \PP^1$ is a finite cover that is ramified only over the points $0,1,$ and $\infty$. The genus of the Belyi map is the genus of the covering curve $X$. 
There are multiple ways to realize a Belyi map of degree~$d$ (see, for example,~\cite{RETref, DessinIntro, WhatIs}):
\begin{enumerate}
\item
explicitly, as a degree $d$ function between projective curves;
\item
combinatorially, as a generating system of degree $d$; and
\item
topologically, as a dessin d'enfant with $d$ edges.
\end{enumerate} 

A \emph{generating system} of degree $d$ is a triple of permutations   $(\sigma_0,\sigma_1, \sigma_\infty) \in S_d^3$ with the property that $\sigma_0\sigma_1 \sigma_\infty =1$ and the subgroup $\langle \sigma_0,\sigma_1 \rangle \subseteq S_d$ is transitive. 

A \emph{dessin d'enfant} (henceforth \emph{dessin}) is a connected bipartite graph embedded in an orientable surface. The dessin has a fixed cyclic ordering of its edges at each vertex; this manifests as a labeling.

In general, it is a simple matter to describe a dessin from either a generating system or a function. 
Given $f:X \to \PP^1$: 
\begin{itemize}
\item
Define a black vertex
for each inverse image of 0.
\item
Define a white vertex for each
 inverse image of 1.  
 \item 
 Define edges
by the inverse images of the line
segment $(0, 1)\in \PP^1$.   
\end{itemize}
This process yields a connected bipartite graph. The labeling of the edges arises from the local monodromy around the vertices.

Similarly, given a generating system $(\sigma_0,\sigma_1, \sigma_\infty) \in S_d^3$, we create a dessin with edges labeled $\{1, 2, . . . , d\}$ via the following recipe:
\begin{itemize}\label{recipe:GenToDessin}
\item
Draw a black vertex for each cycle in $\sigma_0$ (including the one-cycles). The cycles in $\sigma_0$ then give an ordering of edges around each of these vertices.
\item
Draw a a white vertex for each cycle in $\sigma_1$ (including the one-cycles). The cycles in $\sigma_1$ then give an ordering of edges around each of these vertices.
\end{itemize}
 This determines a bipartite graph. Since $\sigma_0$ and $\sigma_1$ generate a transitive subgroup of $S_d$, the graph is connected.

It is equally straightforward to describe a generating system from a dessin. The difficulty in completing the picture is often in giving an explicit function realizing the Belyi map as a covering of $\PP^1$.
 For some recent results in this area, see~\cite{SV, Zvonkin}.

In some simple cases, however, we can explicitly realize this triple correspondence for an infinite family of Belyi maps. For example, for pure power maps we have the following:

\begin{description}
\item[Belyi map]
\begin{align*}
f: \PP^1 &\to \PP^1\\
z &\mapsto z^d
\end{align*}

\item[Generating system]
\[ 
\sigma_0 =  d\text{-cycle}, \qquad \sigma_1 = \text{trivial}, \qquad \sigma_\infty = d\text{-cycle}.
\]

\item[Dessin d'enfant] \ 


\begin{center}
\begin{figure}[h!]
\includegraphics{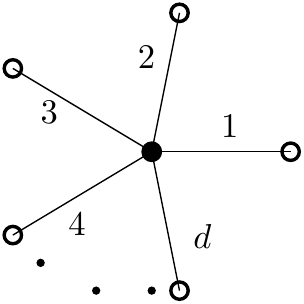}
\caption{The dessin for the degree $d$ power map.}
\end{figure}
\end{center}

\end{description}

 And for Chebyshev polynomials we have:

\begin{description}
\item[Belyi map]
\begin{align*}
f: \PP^1 &\to \PP^1\\
z &\mapsto T_d(z)
\end{align*}

\item[Generating system]
\begin{align*}
\sigma_0 &= (23)(45)\cdots((d-1)d)
&&\text{or } (23)(45)\cdots((d-2)(d-1)).\\
\sigma_1 &=  (12)(34)\cdots((d-2)(d-1))
&&\text{or } (12)(34)\cdots((d-1)d).\\
\sigma_\infty &= d\text{-cycle}.
\end{align*}

\item[Dessin d'enfant] \ 


\begin{center}
\begin{figure}[h!]
\includegraphics{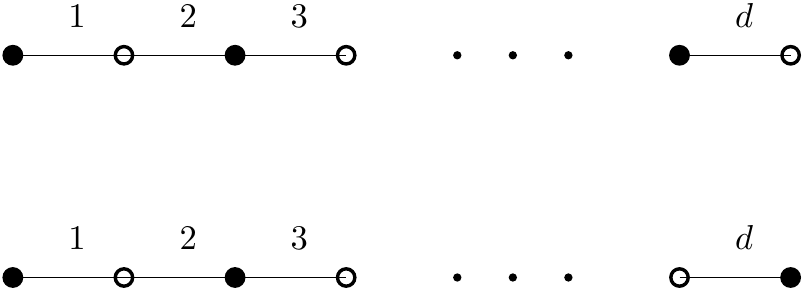}
\caption{The Chebyshev dessins : odd $d$ (top) and even $d$ (bottom).}
\end{figure}
\end{center}

\end{description}

Motivated by work in arithmetic dynamics,  the authors of~\cite{dbm} study \emph{normalized single-cycle dynamical Belyi maps}. 
 The authors begin with a generating system, and they are able to give explicit formulas for two new infinite families of Belyi maps.

In this note, we give a simple description of the dessins for genus~$0$ single-cycle dynamical Belyi maps. As an application, we describe the dessins for the two infinite families of maps in~\cite{dbm}, completing the triptych in these cases.

\subsection*{Acknowledgments}
The authors thank Edray Goins for asking the question that prompted this investigation. We also thank the referee for numerous helpful comments.

\section{Dessins for single-cycle Belyi maps}\label{sec:SingCycDyn}
Riemann's Existence Theorem gives  the following bijections: 
\begin{enumerate}
\item \emph{equivalence classes} of   generating systems of degree $d$. 
\item \emph{isomorphism classes} of Belyi maps $f: X \to \PP^1$ of degree $d$.
\item \emph{isomorphism classes} of  dessins d'enfants with $d$ edges.
\end{enumerate}
In the sequel, we will use equivalent generating systems and isomorphic Belyi maps to simplify the exposition.

\begin{defn*}
Let $f:X \to \PP^1$ be a Belyi map. If $f$ has a single ramification point above each branch point, we say that $f$ is a {\bf single-cycle Belyi map}.
\end{defn*}

Following~\cite{dbm}, we require that single-cycle Belyi maps have exactly three critical points.  So all Belyi maps under consideration have degree $d \geq 3$. If $f:\PP^1 \to \PP^1$ has only two critical points, then it is isomorphic to a pure power map. We have already described the dessins for this case in the Introduction.

In the genus~$0$ single-cycle case, the permutations $\sigma_0$, $\sigma_1$, and $\sigma_\infty$ each correspond to a single nontrivial cycle. Hence the generating system can be written as $(e_0, e_1, e_\infty)$ where $e_i$ gives the length of the nontrivial cycle in $\sigma_i$. Equivalently, $e_i$ is the ramification index of the unique critical point above $i \in \PP^1$.

 The following theorem classifies the dessins d'enfants for all genus~$0$ single-cycle Belyi maps.

\begin{theorem}\label{thm:main}
Let $f:\PP^1 \to \PP^1$ be a degree-$d$ single-cycle Belyi map with generating system $(e_0, e_1, e_\infty)$. Then $f$ admits a planar dessin d’enfant with:
\begin{itemize}
\item
$d − e_1$ white vertices of degree one connected to a black vertex of degree $e_0$, 
\item
$d− e_0$ black vertices of degree one connected to a white vertex of degree $e_1$,
and
\item
$e_0 + e_1 − d$ edges connecting the black vertex of degree $e_0$ and the white vertex of degree $e_1$.
\end{itemize}
\end{theorem}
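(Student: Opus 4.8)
The plan is to read the entire dessin off the cycle structure of the generating system via the recipe from the introduction, and to obtain planarity from the Euler characteristic. Throughout I would work with the generating system $(\sigma_0,\sigma_1,\sigma_\infty)$ directly: $\sigma_0$ is a single $e_0$-cycle together with $d-e_0$ fixed points, $\sigma_1$ is a single $e_1$-cycle together with $d-e_1$ fixed points, and $\sigma_\infty$ is a single $e_\infty$-cycle together with $d-e_\infty$ fixed points. The one external input I would record first is the genus-$0$ Riemann--Hurwitz relation, which here reads $e_0+e_1+e_\infty=2d+1$; in particular $e_\infty\le d$ forces $e_0+e_1-d\ge 1$, so the quantity $e_0+e_1-d$ in the statement is a positive integer.

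Building the dessin by the recipe, the black vertices are the cycles of $\sigma_0$ and the white vertices are the cycles of $\sigma_1$. Thus there is one black vertex of degree $e_0$ together with $d-e_0$ black vertices of degree $1$, and symmetrically one white vertex of degree $e_1$ together with $d-e_1$ white vertices of degree $1$. This already produces the three families of vertices; the substance of the argument is to determine which vertices each edge joins. Let $A$ and $B$ be the supports (the sets of non-fixed labels) of $\sigma_0$ and $\sigma_1$, so $|A|=e_0$ and $|B|=e_1$. An edge labelled $i$ is incident to the black vertex indexed by the $\sigma_0$-cycle through $i$ and to the white vertex indexed by the $\sigma_1$-cycle through $i$; hence both of its endpoints are degree-one leaves exactly when $i\notin A\cup B$.

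The key step is to rule this out and to count the overlap, and here I would invoke transitivity. Since $d\ge 3$, a label fixed by both $\sigma_0$ and $\sigma_1$ would be a one-element orbit of $\langle\sigma_0,\sigma_1\rangle$ disjoint from the nonempty set $A$, contradicting transitivity; therefore $A\cup B=\{1,\dots,d\}$ and, by inclusion--exclusion, $|A\cap B|=e_0+e_1-d$. Reading off the incidences then yields exactly the claimed edges: the $e_0+e_1-d$ labels in $A\cap B$ join the degree-$e_0$ black vertex to the degree-$e_1$ white vertex; the $|A\setminus B|=d-e_1$ labels join the degree-$e_0$ black vertex to the $d-e_1$ white leaves; and the $|B\setminus A|=d-e_0$ labels join the degree-$e_1$ white vertex to the $d-e_0$ black leaves.

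It remains to check planarity, which I expect to be the only real bookkeeping. The faces of the dessin are in bijection with the cycles of $\sigma_\infty$, of which there are $1+(d-e_\infty)$; combined with $V=2+(d-e_0)+(d-e_1)$ vertices and $E=d$ edges, the Euler characteristic is
\[
V-E+F=\big(2+(d-e_0)+(d-e_1)\big)-d+\big(1+(d-e_\infty)\big)=3+2d-(e_0+e_1+e_\infty)=2,
\]
using the Riemann--Hurwitz relation in the last step. Hence the dessin has genus $0$ and is planar, and it has precisely the stated shape. The main obstacle, such as it is, is the transitivity argument of the third paragraph: everything else is forced once one knows that no edge is isolated and that the two nontrivial cycles overlap in exactly $e_0+e_1-d$ labels.
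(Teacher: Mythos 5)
Your proposal is correct, but it runs in the opposite direction from the paper's argument. The paper fixes an explicit representative of the generating system, namely $\sigma_0=(d-e_0+1,\dots,d)$ and $\sigma_1=(1,\dots,e_1)$, uses Riemann--Hurwitz to get $d+1\le e_0+e_1$ so that $d-e_0+1\le e_1$, the supports overlap, and $\langle\sigma_0,\sigma_1\rangle$ is transitive; the dessin is then read off the recipe, and planarity is implicit because $X=\PP^1$. You instead take an arbitrary transitive triple with the given cycle types, use transitivity to show that the supports $A$ and $B$ cover $\{1,\dots,d\}$, deduce $|A\cap B|=e_0+e_1-d$ by inclusion--exclusion, and confirm planarity by computing $V-E+F=2$ from the face--cycle correspondence. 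Your version buys a small but genuine strengthening: it shows that \emph{every} generating system of combinatorial type $(e_0,e_1,e_\infty)$ produces this dessin, not merely that some explicitly chosen representative does, so rigidity of the type comes for free and you never need to invoke the (implicit) step that $f$'s generating system is equivalent to the constructed one. The paper's version is shorter and yields a concrete edge labeling, which is what the subsequent figures use. One tiny remark: your transitivity argument needs only $d\ge 2$ (a common fixed point is a singleton orbit), and your observation that $e_\infty\le d$ forces $e_0+e_1-d\ge 1$ matches the paper's inequality $d+1\le e_0+e_1\le 2d-1$.
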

See Figure~\ref{fig:genbelyi}.

\begin{proof}
Let
\[
\sigma_0 = (d-e_0+1, d-e_0+2, \ldots, d)
\text{ and }
\sigma_1 = (1, 2, \ldots, e_1).
\]
It follows from Riemann-Hurwitz that $e_0 + e_1 + e_\infty = 2d+1$, so 
\[
d+1 \leq e_0 + e_1 \leq 2d-1.
\]
Therefore $\langle \sigma_0, \sigma_1 \rangle$ is transitive since $d-e_0+1 \leq e_1$.
 
The result then follows immediately from the recipe for producing dessins from generating systems described in the Introduction.
\end{proof}


\begin{figure*}[h!]
\includegraphics[scale = 1]{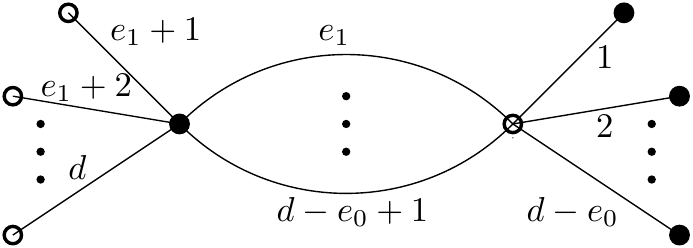}
\caption{The degree $d$, genus~$0$ single-cycle dessin with combinatorial type $(e_0,e_1,e_\infty)$.}\label{fig:genbelyi}
\end{figure*}

Recall that the {\bf diameter} of a graph is the maximal number of vertices traversed in a path. 
The following result gives a restriction on the diameter of the dessins for all single-cycle Belyi maps (not only the genus~$0$ case). 
\begin{prop}
All single-cycle Belyi maps admit a dessin d'enfant of diameter at most~$4$. 
\end{prop}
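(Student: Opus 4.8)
The plan is to reduce the statement to a purely combinatorial fact about the underlying bipartite graph, exploiting that the single-cycle condition severely limits the number of high-degree vertices. First I would recall that for a single-cycle Belyi map each of $\sigma_0$ and $\sigma_1$ consists of exactly one nontrivial cycle (of length $e_0$, respectively $e_1$) together with fixed points. Under the recipe from the Introduction, the nontrivial cycle of $\sigma_0$ produces a single black vertex $B$ of degree $e_0$, each fixed point of $\sigma_0$ produces a degree-one black vertex, and symmetrically the nontrivial cycle of $\sigma_1$ produces a single white vertex $W$ of degree $e_1$ while its fixed points produce degree-one white vertices. The crucial observation is that this count depends only on the cycle types of $\sigma_0$ and $\sigma_1$: in the dessin of \emph{any} single-cycle Belyi map there are at most two vertices of degree greater than one, namely $B$ and $W$, while every remaining vertex is a leaf.

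Next I would note that the diameter is an invariant of the abstract graph and does not see the ribbon structure (the cyclic orderings that record the embedding, and hence the genus). Thus it suffices to bound the diameter of this abstract bipartite skeleton, and the higher-genus cases are handled by exactly the same argument as the genus-$0$ case treated in Theorem~\ref{thm:main}.

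The heart of the argument is then a one-line graph-theoretic lemma: in any graph having at most two vertices of degree at least two, every simple path traverses at most four vertices. Indeed, every internal (non-endpoint) vertex of a simple path is joined in the path both to its predecessor and to its successor, so it has degree at least two in the graph; hence a path has at most two internal vertices, and adjoining its two endpoints gives at most four vertices in all. Applying this with the distinguished pair $B$ and $W$ yields the desired bound of $4$ on the diameter. In fact this shows the stronger statement that \emph{every} dessin of a single-cycle Belyi map has diameter at most $4$, which a fortiori gives the existence asserted in the proposition.

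I do not expect a serious obstacle here; the content lies in setting up the right viewpoint rather than in any computation. The single point to treat with care is the claim that the bound is genus-independent: I would stress that the number of vertices of degree at least two is determined by the cycle structure of $\sigma_0$ and $\sigma_1$ alone, so although Theorem~\ref{thm:main} describes only the planar dessins explicitly, the same two-hub skeleton---and therefore the same diameter bound---persists for every single-cycle Belyi map regardless of its genus.
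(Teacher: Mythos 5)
Your proposal is correct and follows essentially the same route as the paper: the single-cycle condition forces at most two vertices of degree greater than one, and any simple path has all its internal vertices of degree at least two, hence at most four vertices in total. You simply make explicit the graph-theoretic lemma and the genus-independence that the paper's proof leaves implicit.
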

\begin{proof}
Let $f:X \to \PP^1$ be a single-cycle  Belyi map. So there is a unique ramification point above $0$ and a unique ramification point above $1$. Hence there are exactly two vertices in the dessin with degree greater than $1$. This implies that the longest path can only include those two vertices and two additional vertices, one black and one white. 
\end{proof}

\section{New triptychs for single-cycle Belyi maps}\label{sec:NewTrips}
Applying Theorem~\ref{thm:main} to the Belyi maps in~\cite{dbm} allows us to describe the three-way correspondence for two new infinite families of Belyi maps.

\subsection{Single-cycle polynomials} Let $f: \PP^1 \to \PP^1$ be a degree~$d$ single-cycle Belyi  polynomial, so $(e_0, e_1, e_\infty) = (d-k, k+1, d)$ for some $1 \leq k < d-1$. We have the following correspondence:

\begin{description}
\item[Belyi map] (from~\cite{dbm})
\begin{align*}
f: \PP^1 &\to \PP^1\\
z &\mapsto cx^{d-k} (a_0x^k+\ldots +a_{k-1}x+a_k),
\end{align*}
where
\[
a_i:=  \frac{(-1)^{k-i}}{(d-i)} \binom{k}{i} \hspace{2mm} \text{and} \hspace{2mm} c=\frac{1}{k!}              \prod_{ j=0}^{k}(d-j). \]

\item[Generating system]
\[ 
\sigma_0 =  (d-k)\text{-cycle}, \qquad \sigma_1 = (k+1)\text{-cycle}, \qquad \sigma_\infty = d\text{-cycle}.
\]

\item[Dessin d'enfant] See Figure~\ref{fig:genpoly}.

\begin{figure*}[h!]
\includegraphics[scale=.8]{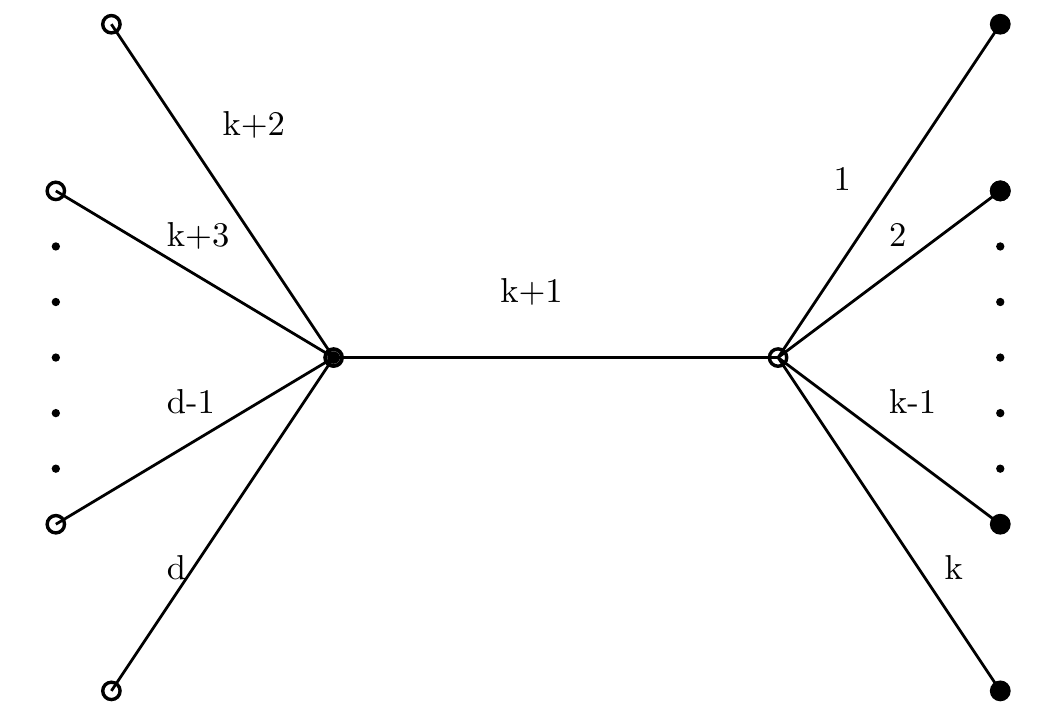}
\caption{The dessin with combinatorial type $(d-k,k+1,d)$.}\label{fig:genpoly}
\end{figure*}

\end{description}

\begin{example*}
The dessin for the polynomial $f(z) = z^3(6z^2-15z+10)$, which has combinatorial type $(3, 3, 5)$, is shown in Figure~$\ref{fig:specificpoly}$.  

\begin{figure*}[h!]
\includegraphics[scale=.9]{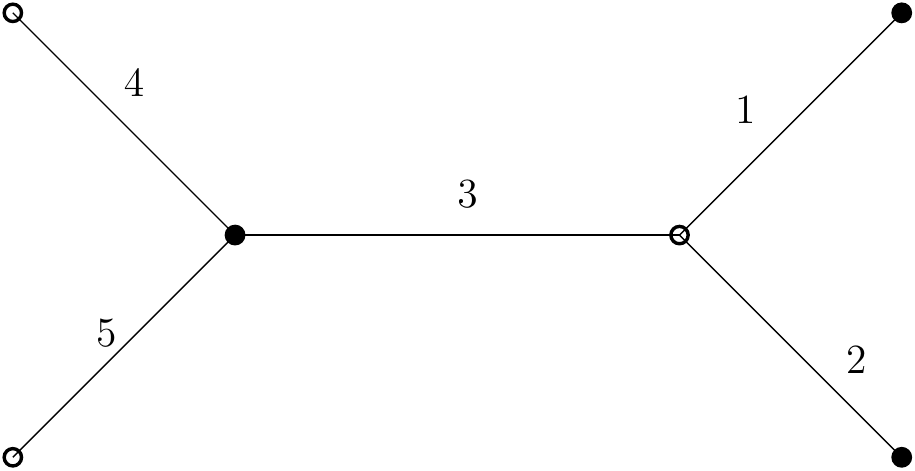}
\caption{The dessin with combinatorial type $(3,3,5)$.}\label{fig:specificpoly}
\end{figure*}

\end{example*}

\subsection{Symmetric single-cycle Belyi maps}
We turn now to the second family of Belyi maps described  in~\cite{dbm}. These have combinatorial type $\left(d-k,2k+1,d-k\right)$, meaning that the critical points above $0$ and~$\infty$ have the same ramification index. (Here $d$ is the degree of the Belyi map and $1 \leq k < d-1$.) We have the following correspondence:

\begin{description}
\item[Belyi map] (from~\cite{dbm})
\begin{align*}
f: \PP^1 &\to \PP^1\\
z &\mapsto x^{d-k}\left( \frac{a_0x^k-a_1x^{k-1}+\ldots
            +(-1)^ka_k}{(-1)^k a_k x^k +\ldots -a_1x+ a_0}\right),
\end{align*}
where
\[
a_i:=\binom{k}{i}\prod_{k+i+1\leq j \leq 2k} (d-j)\prod_{0\leq j \leq i-1}(d-j) =
k!\binom{d}{i}\binom{d-k-i-1}{k-i}.
\]

\item[Generating system]
\[ 
\sigma_0 =  (d-k)\text{-cycle}, \qquad \sigma_1 = (2k+1)\text{-cycle}, \qquad \sigma_\infty = (d-k)\text{-cycle}.
\]

\item[Dessin d'enfant] \ 

\begin{figure*}[h!]
\includegraphics[scale=.75]{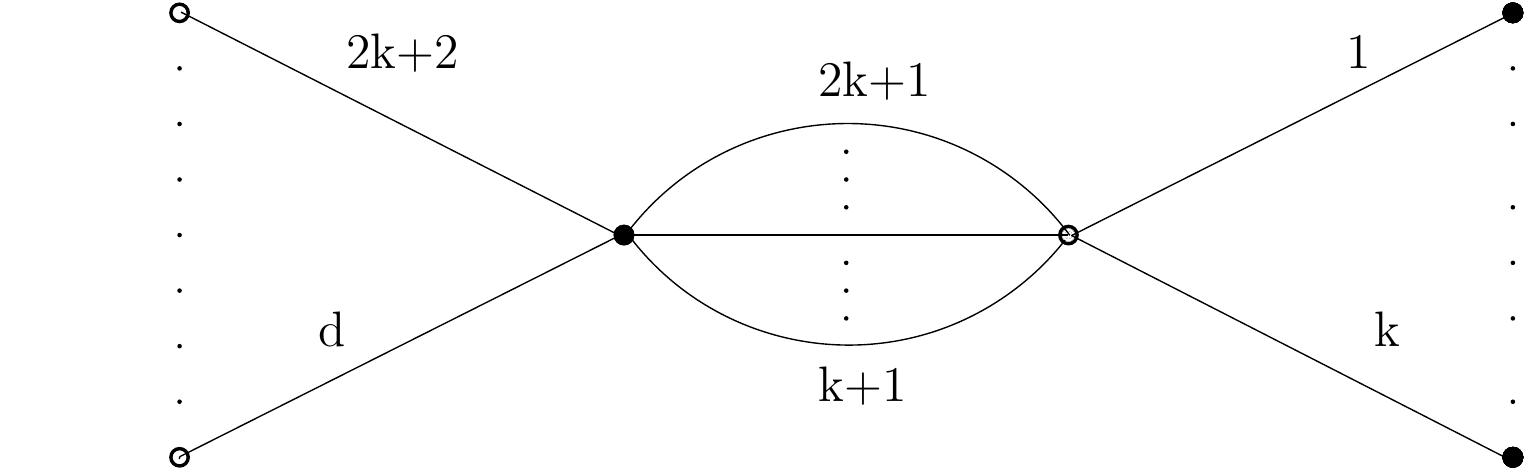}
\caption{The dessin with combinatorial type $(d-k,2k+1,d-k)$.}\label{fig:genrat}
\end{figure*}

\end{description}

\begin{example*}
The dessin for the map 
\[f(z) = z^8 \left(\frac{ 42 z^2-120 z+90}{90z^2-120 z+42}\right),
\]
which has combinatorial type $(8, 5, 8)$, is shown in Figure~$\ref{fig:specificrat}$. 

\begin{figure*}[h!]
\includegraphics[scale=.75]{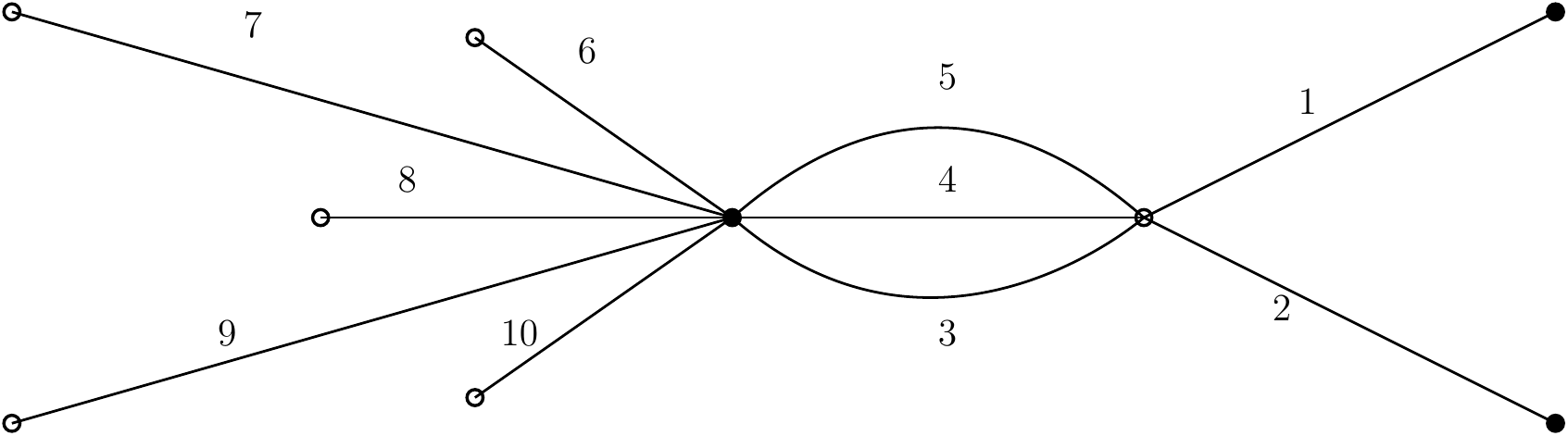}
\caption{The dessin with combinatorial type $(8,5,8)$.}\label{fig:specificrat}
\end{figure*}

\end{example*}

\bibliography{dbm_manes}{}

\begin{thebibliography}{1}

\bibitem{dbm}
Jacqueline Anderson, Irene Bouw, Ozlem Ejder, Neslihan Girgin, Valentijn
  Karemaker, and Michelle Manes.
\newblock Dynamical {B}elyi maps, 2017.

\bibitem{RETref}
Frits Beukers and Hans Montanus.
\newblock Explicit calculation of elliptic fibrations of {$K3$}-surfaces and
  their {B}elyi-maps.
\newblock In {\em Number theory and polynomials}, volume 352 of {\em London
  Math. Soc. Lecture Note Ser.}, pages 33--51. Cambridge Univ. Press,
  Cambridge, 2008.

\bibitem{DessinIntro}
Ernesto Girondo and Gabino Gonz\'alez-Diez.
\newblock {\em Introduction to compact {R}iemann surfaces and dessins
  d'enfants}, volume~79 of {\em London Mathematical Society Student Texts}.
\newblock Cambridge University Press, Cambridge, 2012.

\bibitem{SV}
Jeroen Sijsling and John Voight.
\newblock On computing {B}elyi maps.
\newblock In {\em Num\'ero consacr\'e au trimestre ``{M}\'ethodes
  arithm\'etiques et applications'', automne 2013}, volume 2014/1 of {\em Publ.
  Math. Besan\c{c}on Alg\`ebre Th\'eorie Nr.}, pages 73--131. Presses Univ.
  Franche-Comt\'e, Besan\c{c}on, 2014.

\bibitem{WhatIs}
Leonardo Zapponi.
\newblock What is...a dessin d’enfant?
\newblock {\em Notices of the Amer. Math. Soc.}, 50:788--789, 2003.

\bibitem{Zvonkin}
Alexander Zvonkin.
\newblock Belyi functions: examples, properties, and applications.
\newblock \url{http://www.labri.fr/perso/zvonkin/Research/belyi.pdf}.

\end{thebibliography}
\bibliographystyle{plain}

\end{document}